\def\paragraph{\@startsection{paragraph}{4}%
  \z@\z@{-\fontdimen2\font}%
  {\normalfont\bfseries}}
\tikzset{>={Latex}}
\newcommand{\Z}{\mathbb{Z}}
\newcommand{\mc}{\mathcal}
\newcommand{\mb}{\mathbb}
\renewcommand{\b}{\beta}
\newcommand{\g}{\gamma}
\renewcommand{\d}{\delta}
\newcommand{\s}{\sigma}
\newcommand{\p}{\partial}
\renewcommand{\t}{\tau}
\newcommand{\Om}{\Omega}
\newcommand{\Si}{\Sigma}
\newcommand{\La}{\Lambda}
\newcounter{desccount}
\newcommand{\descref}[1]{\hyperref[#1]{#1}}
\newcommand{\lp}{\left(}
\newcommand{\rp}{\right)}
\newcommand{\set}[1]{\left\{#1\right\}}
\renewcommand{\r}{\rightarrow}
\newcommand{\xr}{\xrightarrow}
\newcommand{\im}{\operatorname{im}}
\newcommand{\pathhom}{\mathbf{PathHom}}
\newcommand{\dfchom}{\mathbf{DFCHom}}
\newcommand{\mlp}[1]{K^{\rightarrow}_{n_1,\ldots,n_{#1}}} 
\newcommand{\mlpgraph}[1]{K_{n_1,\ldots,n_{#1}}}
\newcommand{\rank}{\operatorname{rank}}
\newtheorem{theorem}{Theorem}
\newtheorem{proposition}[theorem]{Proposition}
\newtheorem{lemma}[theorem]{Lemma}
\theoremstyle{definition}
\newcommand{\pushright}[1]{\ifmeasuring@#1\else\omit\hfill$\displaystyle#1$\fi\ignorespaces}
\newcommand{\pushleft}[1]{\ifmeasuring@#1\else\omit$\displaystyle#1$\hfill\fi\ignorespaces}
\title{Path homologies of deep feedforward networks\\
\thanks{}
}
 \author{\IEEEauthorblockN{Samir Chowdhury}
 \IEEEauthorblockA{\textit{Stanford University}\\
 samirc@stanford.edu}
 \and
 \IEEEauthorblockN{Thomas Gebhart}
 \IEEEauthorblockA{\textit{University of Minnesota}\\
 gebhart@umn.edu}
 \and
 \IEEEauthorblockN{Steve Huntsman}
 \IEEEauthorblockA{\textit{BAE Systems FAST Labs} \\
 steve.huntsman@baesystems.com}
 \and
 \IEEEauthorblockN{Matvey Yutin}
 \IEEEauthorblockA{\textit{UC Berkeley and BAE Systems FAST Labs} \\
 myutin@berkeley.edu}
 }
\begin{document}
\maketitle

\begin{abstract}
We provide a characterization of two types of directed homology for fully-connected, feedforward neural network architectures. These exact characterizations of the directed homology structure of a neural network architecture are the first of their kind. We show that the directed flag homology of deep networks reduces to computing the simplicial homology of the underlying undirected graph, which is explicitly given by Euler characteristic computations. We also show that the path homology of these networks is non-trivial in higher dimensions and depends on the number and size of the layers within the network. These results provide a foundation for investigating homological differences between neural network architectures and their realized structure as implied by their parameters.
\end{abstract}

\section{Introduction}

Deep neural networks have emerged as an effective class of machine learning algorithms across a wide range of domains \cite{lecun2015deep}. The superior performance of these algorithms in comparison to other traditional machine learning methods may be attributed to their structure, consisting of a layer-wise composition of nonlinear functions parameterized by a set of real-valued weight matrices which define connectivity between layers \cite{hornik1991approximation, mhaskar2017and}.

These parameters, governing connectivity and information flow within the network, can then be optimized by gradient descent for performance on a given task. For certain tasks, enforcing particular connectivity patterns by altering the network architecture can benefit the performance of the algorithms. These architectural priors are relevant to the functionality of deep neural networks on particular tasks as they define \textit{a priori} a partitioning of the input information as it is processed throughout the network. Because the architecture is fixed during training, gradient descent searches for an optimal parameterization for the task given this connectivity structure. 

While certain connectivity biases have proven successful in various domains \cite{krizhevsky2012imagenet, he2015deep}, the problem of defining an optimal connectivity pattern for a given task is still an open one. Neural architecture search \cite{elsken2019neural} attacks this problem by attempting to optimize over network architectures using a variety of search strategies. However, the space of possible architectures is combinatorially large. This fact, combined with the high computational costs of training a single architectural instantiation on a task, means these search methods are severely restricted in their ability to properly span the possible space of network architectures. Better topological priors on optimal connectivity for a given task are needed to better constrain this search space. Recent work has shown that even in popular network architectures that achieve near-optimal performance on a task, a substructure with a drastically reduced parameter set that achieves similar task performance is likely to exist \cite{frankle2018lottery}. This result, in combination with the effectiveness of pruning techniques \cite{han1506learning, li2016pruning}, implies that many neural networks are overparameterized by architectures that do not properly constrain the partitioning of the input across the network. However, gradient descent is still able to approximate this optimal topology, and this is reflected within the trained parameters of the network \cite{frankle2018lottery}. A method for determining the extent to which network-topological structures of a trained network differ from the topological structure implied by its architecture can provide insight on the proper connectivity structure for a given task. We provide an initial step in this direction with the characterization of two directed homologies of fully-connected neural networks. This framework provides the means to compare the actualized connectivity structure of parameterized neural networks to the homological structure defined by their architectures.

Understanding the topological structure of the network architecture naturally translates to a question of understanding the topological structure of the underlying directed graph. Recent developments in this direction with strong mathematical foundations include the theories of \emph{path homology} and \emph{directed flag complex (DFC) homology}. Path homology was developed by Grigor'yan, Lin, Muranov, and Yau \cite{Grigoryan2012}, and an accompanying theory of digraph homotopy was later developed in \cite{Grigoryan2014b}. This in turn was consistent with earlier notions of homotopy of graphs \cite{babson2006homotopy}. DFC homology was popularized via \cite{reimann2017cliques}, and is built on top of a notion of ordered simplicial homology that has concrete mathematical foundations \cite{munkres-book}. In both of these cases, persistent-homological frameworks have been developed recently \cite{Chowdhury2018, turner2019rips}. These recent developments have thus provided novel tools for approaching the problem of understanding neural architecture.

\subsection{Contributions and statement of results}
\label{result}

In this paper, we provide a characterization of the reduced path homology of fully-connected, feedforward neural networks (i.e. multilayer perceptrons (MLPs)) in terms of their architecture. This exact characterization of the path homology structure of a neural network is the first of its kind. Additionally, we provide a characterization of the DFC homology of MLPs, and show that it reduces to computing simplicial homology of the underlying undirected graph viewed as a simplicial complex. These results provide a starting point for investigating differences between the inherent homological structure of a neural network architecture versus its realized (persistent) homological structure as implied by its learned parameters.

Specifically, let $\mlp{L}$ denote the directed acyclic graph corresponding to the architecture of an MLP with $L$ layers of widths $\set{n_1,n_2,\ldots, n_L}$. See Figure \ref{fig:IrisMLP} for an example.
Individual layers (as sets of vertices) are denoted $K_1,K_2,\ldots, K_L$. 
In graph-theoretic terms, $\mlp{L}$ has node set $\cup_{i=1}^L K_i$ and edge set $\{(v,v') : v \in K_i, \, v' \in K_{i+1}, \, 0\leq i \leq L-1\}.$ 
Also let $\mlpgraph{L}$ denote the underlying undirected graph, viewed as a simplicial complex. We show that $\mlp{L}$ has nontrivial reduced path homology (with field coefficients) precisely in degree $(L-1)$, and that this homology group has rank $\prod_{i=1}^L(n_i -1 )$. 

\begin{theorem}
\label{thm:phmlp}
Let $\mlp{L}$ be the MLP with $L$ layers of widths $\set{n_1,n_2,\ldots, n_L}$. Then we have
\[  \rank \lp\pathhom_p(\mlp{L} )\rp = \d^{L-1}_p \prod_{i = 1}^L (n_i - 1).\] 

\end{theorem}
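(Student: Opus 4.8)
The plan is to compute the path-homology chain complex $(\Omega_\bullet, \partial)$ of $\mlp{L}$ explicitly and read off its homology. Recall that an elementary $p$-path $v_0\cdots v_p$ is \emph{allowed} iff every consecutive pair is a directed edge, and that $\Omega_p$ is the space of allowed $p$-paths whose boundary is again allowed. The first observation is that, since edges run only between consecutive layers and each layer is internally edgeless, an allowed $p$-path must visit exactly one vertex in each of a block of consecutive layers $K_a, K_{a+1}, \ldots, K_{a+p}$; in particular $\Omega_p = 0$ for $p \ge L$.

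First I would prove the key structural lemma describing $\Omega_p$. Fix the block of layers $[a,a+p]$ spanned by a path. Writing $V_i := \mathbb{F}^{K_i}$ for the vertex space of layer $i$ and $\tilde V_i := \ker(\mathrm{sum}\colon V_i \to \mathbb{F})$ (of dimension $n_i - 1$), the allowed paths on this block form $V_a \otimes V_{a+1}\otimes\cdots\otimes V_{a+p}$. Applying $\partial$, the terms that omit an interior vertex (in a layer $\ell$ with $a < \ell < a+p$) produce a path with a forbidden ``skip'' from layer $\ell-1$ to layer $\ell+1$; $\partial$-invariance forces, for each such $\ell$, the partial sum over the $\ell$-th tensor factor to vanish, i.e.\ that factor lies in $\tilde V_\ell$. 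The two endpoint omissions are always allowed and impose no constraint. Hence
\[
\Omega_p \;=\; \bigoplus_{1 \le a \le a+p \le L} V_a \otimes \tilde V_{a+1}\otimes\cdots\otimes \tilde V_{a+p-1}\otimes V_{a+p},
\]
and $\partial$ on each summand is the signed sum of its two endpoint-omission maps, contracting the block from the left (apply the sum map on $V_a$, include $\tilde V_{a+1}\hookrightarrow V_{a+1}$) or from the right. This lemma is the heart of the argument and, I expect, the main obstacle: it is where the precise geometry of the MLP --- complete connection between adjacent layers, no skip edges --- must be converted into the clean tensor description, and where the interior/endpoint asymmetry has to be tracked carefully.

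Given the lemma, the top-degree group is immediate: for $p = L-1$ only the block $[1,L]$ survives, $\partial_L = 0$, and a chain is a cycle iff both endpoint-sum maps vanish, so $H_{L-1} = \ker \partial_{L-1} = \tilde V_1 \otimes \cdots \otimes \tilde V_L$, of rank $\prod_{i=1}^L(n_i-1)$. It then remains to show the homology vanishes in all degrees $p < L-1$ (reduced, so including degree $0$, which is just connectivity). Here I would induct on $L$ using the induced subgraph $G' := \mlp{L-1}$ obtained by dropping the last layer. Since $G'$ is induced, $\Omega_\bullet(G')$ is a subcomplex, and the quotient complex $Q_\bullet$ consists exactly of the blocks ending at layer $L$; in the quotient the right-contraction lands in the subcomplex and hence dies, so $Q_\bullet \cong \mathbb{F}^{K_L}\otimes \bar Q_\bullet$ where $\bar Q_\bullet$ is a one-sided, Koszul-type complex that one checks directly is exact except at the top, with top homology $\tilde V_1\otimes\cdots\otimes\tilde V_{L-1}$. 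Thus $H_\ast(Q)$ is concentrated in degree $L-1$ with rank $n_L\prod_{i=1}^{L-1}(n_i-1)$.

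Finally I would feed this into the long exact sequence of the pair (in reduced homology). By the inductive hypothesis $H_\ast(G')$ is concentrated in degree $L-2$ with rank $\prod_{i=1}^{L-1}(n_i-1)$; combined with $H_\ast(Q)$ concentrated in degree $L-1$, the sequence collapses to $0 \to H_{L-1}(G) \to H_{L-1}(Q) \to H_{L-2}(G') \to H_{L-2}(G) \to 0$ and forces $H_p(G) = 0$ for all $p \le L-3$. Rather than compute the connecting map, I would simply invoke that the alternating sum of dimensions in this four-term exact sequence vanishes: substituting the already-known value $\dim H_{L-1}(G) = \prod_{i=1}^L(n_i-1)$ together with the ranks of $H_{L-1}(Q)$ and $H_{L-2}(G')$ makes every term cancel and yields $\dim H_{L-2}(G) = 0$. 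This closes the induction and gives exactly $\rank \pathhom_p(\mlp{L}) = \delta^{L-1}_p \prod_{i=1}^L(n_i-1)$.
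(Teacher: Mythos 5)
Your proof is correct, but it takes a genuinely different route from the paper's. The paper never computes $\Omega_\bullet$ in full: it characterizes only the top kernel $\ker(\partial_{L-1})$, by an induction on layers that uses the Leibniz rule to rewrite a kernel element as $\sum_i w_i v_i$ with each $w_i$ a cycle of the truncated $(L-1)$-layer network, then massages the coefficients into differences $u_i(v_1 - v_j)$ via a supply--demand (transportation-matrix) argument, with the two-layer network as base case; vanishing in degrees $p \le L-2$ is then proved by an explicit coning construction that splits a cycle according to its terminal layer and exhibits a bounding chain by appending a vertex of the next layer (or prepending one, for summands ending at layer $L$). You instead prove a global structural lemma identifying every $\Omega_p$ as a direct sum, over blocks of consecutive layers $[a,a+p]$, of tensor products $V_a \otimes \tilde V_{a+1}\otimes\cdots\otimes \tilde V_{a+p-1}\otimes V_{a+p}$ (with $\tilde V_\ell$ the augmentation kernel), a statement with no analogue in the paper, and then obtain vanishing homologically: induction on $L$ through the short exact sequence $0 \to \Omega_\bullet(\mlp{L-1}) \to \Omega_\bullet(\mlp{L}) \to Q_\bullet \to 0$, the Koszul-type exactness of the quotient below its top degree, and the long exact sequence together with a dimension count. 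Your route buys a complete description of the path chain complex of an MLP (strictly more than the theorem needs, and directly reusable, e.g.\ for the filtered/persistent computations the paper alludes to), and it replaces the paper's somewhat fiddly transportation argument with standard tensor linear algebra; the paper's route is more elementary and self-contained (no quotient complexes or long exact sequences) and produces explicit cycle bases $B_{L-1}$ and explicit bounding chains, which matter for interpretation and algorithms. Two small points to tighten in your write-up: justify that the $\partial$-invariance constraint decouples across blocks (no cross-block cancellation can occur, because a non-allowed boundary term has a layer support with a unique interior gap and hence determines its block), and state the base case of your induction ($L=1$, say), which is immediate from your lemma since then $\Omega_0 = V_1$ and the reduced $H_0$ is $\tilde V_1$.
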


Additionally, we show that the DFC homology of $\mlp{L}$ reduces to the simplicial homology (denoted $H_p^\Delta$) of $\mlpgraph{L}$, which in turn counts the number of loops in $\mlpgraph{L}$.   
\begin{theorem}
\label{thm:dfchmlp}
We have
\[  \rank \lp\dfchom_p(\mlp{L} )\rp = \rank \lp(H_p^\Delta(\mlpgraph{L}) \rp).\] 
Specifically, this rank is 1 for $p=0$, $(1-\#V+ \#E)$ for $p=1$, and 0 for $p \geq 2$. Here $\#V$ and $\#E$ are the numbers of vertices and edges, respectively.  
\end{theorem}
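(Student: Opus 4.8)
The plan is to show that the directed flag complex of $\mlp{L}$ is concentrated in degrees $0$ and $1$, so that its homology is forced to coincide with that of a one-dimensional complex, namely the graph $\mlpgraph{L}$, whose homology is elementary to compute.

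First I would recall that a $p$-simplex of the directed flag complex is an ordered tuple $(v_0, \ldots, v_p)$ forming a directed clique, i.e.\ with a directed edge $v_i \to v_j$ whenever $i < j$. The key combinatorial observation is that the layered structure of $\mlp{L}$ forbids such cliques once $p \geq 2$. Concretely, every directed edge $u \to w$ increases the layer index by exactly one: if $u \in K_i$ then $w \in K_{i+1}$. Hence if $(v_0, v_1, v_2)$ were a directed $2$-simplex with $v_0 \in K_i$, the edge $v_0 \to v_1$ would force $v_1 \in K_{i+1}$, the edge $v_1 \to v_2$ would force $v_2 \in K_{i+2}$, while the edge $v_0 \to v_2$ would force $v_2 \in K_{i+1}$ --- a contradiction. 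So $\mlp{L}$ contains no directed clique on three or more vertices, and its directed flag complex has simplices only in degrees $0$ and $1$.

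Next I would identify the resulting truncated chain complex. In degrees $0$ and $1$ the DFC chains are spanned by the vertices and the directed edges of $\mlp{L}$, and the boundary of an edge $(u, w)$ is $w - u$. These are exactly the generators and boundary map of the (ordered) simplicial chain complex of the $1$-dimensional complex $\mlpgraph{L}$ --- and the same layer-index argument shows $\mlpgraph{L}$ has no triangles, so it too is genuinely one-dimensional whichever convention (graph or clique complex) one adopts. The two chain complexes therefore agree in every degree, giving $\dfchom_p(\mlp{L}) \cong H_p^\Delta(\mlpgraph{L})$, which is the first assertion of the theorem.

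Finally I would read off the ranks from the homology of a graph. Since $\mlp{L}$ connects every layer to the next, $\mlpgraph{L}$ is connected, so $\rank(H_0^\Delta) = 1$; since the complex is one-dimensional, $H_p^\Delta = 0$ for $p \geq 2$. The rank in degree $1$ then follows from the Euler characteristic identity $\rank(H_0^\Delta) - \rank(H_1^\Delta) = \#V - \#E$, which yields $\rank(H_1^\Delta) = 1 - \#V + \#E$. The clique-exclusion argument is the real mathematical crux and is short; I expect the only point requiring care to be verifying that the directed and undirected boundary maps are genuinely identified in degree one, so that no orientation subtlety disturbs the isomorphism of chain complexes.
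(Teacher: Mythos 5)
Your proposal is correct and takes essentially the same route as the paper's proof: both establish that the directed flag complex of $\mlp{L}$ has no simplices in degree $\geq 2$ (since edges only connect consecutive layers, a directed $2$-clique would require a skip connection), identify the degree-$0$ and degree-$1$ DFC chains and boundary maps with those of the simplicial chain complex of $\mlpgraph{L}$ (the key point being that at most one of $v \to v'$ or $v' \to v$ exists, so the orientation identification is unambiguous), and then invoke the Euler characteristic of a connected graph. The only difference is one of explicitness --- you spell out the layer-index contradiction and the Euler characteristic identity, which the paper compresses into brief remarks.
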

Stated differently, Theorem \ref{thm:dfchmlp} shows that $\dfchom$ picks out the structure of (undirected) loops in the MLP architecture.

\section{Related Work}

Work on homological approaches to neural network analyses have shown promise in the ability to extract insights about network function through the investigation of topological structure of network parameters \cite{carlsson2018topological, Nguyen2019, Gabella2019, bruel2019topology, guss2018characterizing, hofer2017deep, moor2019topological, hofer2019connectivity}. In \cite{Rieck2019}, the authors find that analyzing the $0$-dimensional persistent-homological structure of neural network parameters during training provides insight into when the training process may be considered completed. Similarly, the authors of \cite{Gebhart2019} analyze the $0$-dimensional persistent-homological structure of neural network activations and find that this structure is closely linked to the representations used by the network to make classification decisions. Both of the above papers make use of a Vietoris-Rips filtration over the graph defined by the network architecture. Notably, this filtration does not preserve the inherent directionality of the network defined by sequential layers. The asymmetry arising from directionality within the network is important to capture in order to faithfully represent the homological properties of the flow of information through each layer. The papers also lack a homological basis from which to compare the persistent homological structures that emerge as weights are thresholded to the inherent network homology. Our result in Section \ref{result} provides such a homological basis for path homology of fully-connected neural networks from which one may compare the actualized homological structure of the weighted network versus the homological structure defined by its architecture.  

The prior discussion relates to investigating the topological structure of the network architecture; there have also been very recent developments showing how to backpropagate a topological loss function through a deep neural network \cite{bruel2019topology, hu2019topology}. These techniques build on insights developed throughout \cite{chen2018topological, poulenard2018topological, gabrielsson2018topology}.
While our methods are grounded in constructions arising from topology, a geometric viewpoint of ReLU deep networks has been introduced in \cite{lei2018geometric}.

\section{Preliminaries}

In this section, we provide background material on homology, path homology, and DFC homology. We refer the reader to \cite{munkres-book} for additional details on homology (specifically \S1.13 for ordered homology), \cite{reimann2017cliques} for details on DFC homology, and \cite{Grigoryan2014b} for details on path homology. 

We write $\Z_+$ to denote the nonnegative integers. Fix a field $\mb{K}$. A \emph{chain complex} is defined to be a sequence of vector
spaces $(C_p)_{p\in \Z}$ over $\mb{K}$ and \emph{boundary maps} $(\p_p:C_p\r C_{p-1})_{k\in \Z}$ satisfying the condition $\p_{p-1}\circ \p_p =0$ for each $p \in \Z$. We often denote a chain complex as $\mc{C}=(C_p,\p_p)_{p\in \Z}$. Given a chain complex $\mc{C}$ and any $p\in \Z$, one defines the following: 
\begin{align*}
Z_p(\mc{C}) &:= \ker(\p_p) = \set{c\in C_k : \p_p(c) = 0}, \text{ the \emph{$p$-cycles}},\\ 
B_p(\mc{C}) &:=\im(\p_{p+1}) = \{c \in C_{p} : c = \p_{p+1} (b) \text{ for some }\\ 
& \hspace{1in} b\in C_{p+1}\}, \text{the \emph{$p$-boundaries}}.
\end{align*}

The quotient vector space $H_p(\mc{C}) :=Z_p(\mc{C})/B_p(\mc{C}) $ is called the \emph{$p$-th homology vector space of the chain complex $\mc{C}$}. The dimension of $H_p(\mc{C})$ is called the \emph{$p$-th Betti number} of $\mc{C}$, denoted $\b_p(\mc{C})$. These vector spaces can be made to arise from data via the following construction.

A \emph{simplicial complex} $\Sigma$ built on a set $S$ is (abstractly) a collection of subsets $\s \subseteq S$ such that whenever $\t \subseteq \s \in \Si$, we have $\t \in \Sigma$. The $(p+1)$-length elements of $\Si$ are referred to as \emph{$p$-simplices}. The elements of a simplex are called \emph{vertices}. We additionally fix an arbitrary total ordering on $S$. Different orderings of the vertices of a simplex are considered equivalent if they differ by an even permutation. Thus a $p$-simplex $\s \in S$ for $p \geq 1$ belongs to two equivalence classes, and each class is called an \emph{orientation} of $\s$. For each $p \in \Z_+$, we write $\Sigma_p$ to denote the $p$-simplices of $\Si$. 

The standard construction of a chain complex from a simplicial complex is obtained by defining $C_p(\Si)$ to be the free vector space over $\Si_p$ for each $p \geq 0$, with coefficients in $\mb{K}$, along with the relation $\s = -\t$ if $\t$ differs from $\s$ by an odd permutation. Additionally one defines $C_{-1}(\Si) = \mb{K}$ and $C_{p}(\Si) = \{0\}$ for $p \leq -2$ (this corresponds to \emph{reduced} homology). Finally, for any $p \in \Z_+ $, one defines a linear map $\p_p: C_p \r C_{p-1}$ to be the linearization of the following map on the generators of $C_p$:
\begin{align}
\p_p([x_0,\ldots,x_p]) &:=\sum_{i=0}^p(-1)^i[x_0,\ldots,\widehat{x_i},\ldots,x_p], \label{eq:bd-map}
\end{align}
for each $p$-simplex $[x_0,\ldots,x_p]\in C_p$. Here $\widehat{x_i}$ denotes omission of $x_i$ from the sequence.
Additionally, $\p_{p}$ is defined to be the zero map for $p \leq -1$. These constructions fully determine simplicial homology, which we denote by $H_p^\Delta$. 

An undirected graph $G = (V,E)$ has a natural representation as a simplicial complex: the nodes are 0-simplices, and the edges are 1-simplices. There are no higher-dimensional simplices. We write $\Si_p(G)$ to denote the $p$-simplices of $G$.

\subsection{Path homology}

Given a finite set $X$ and any integer $p \in Z_+$, an \emph{elementary $p$-path over $X$} is a sequence $(x_0,\ldots, x_p)$ of $p+1$ elements of $X$. For each $p\in \Z_+$, the free vector space consisting of all formal linear combinations of elementary $p$-paths over $X$ with coefficients in $\mathbb{K}$ is denoted $\La_p= \La_p(X)=\La_p(X,\mathbb{K})$. One also defines $\La_{-1}:=\mb{K}$ and $\La_{p}:=\set{0}$ for $p \leq -2$. 
The boundary maps are defined as in Equation (\ref{eq:bd-map}), and we overload notation to denote them by $\p_p$ as before.
It follows that $(\La_p,\p_p)_{p\in \Z}$ is a chain complex.

Next let $G=(X,E)$ be a digraph. For each $p\in \Z_+$, one defines an elementary $p$-path $(x_0,\ldots, x_p)$ on $X$ to be \emph{allowed} if $(x_i,x_{i+1})\in E$ for each $0\leq i\leq p-1$. For each $p\in \Z_+$, the free vector space on the collection of allowed $p$-paths on $(X,E)$ is denoted $\mc{A}_p=\mc{A}_p(G)=\mc{A}_p(X,E,\mathbb{K})$, and is called the \emph{space of allowed $p$-paths}. One further defines $\mc{A}_{-1}:=\mathbb{K}$ and $\mc{A}_{p}:=\set{0}$ for $p \leq -2$.

The allowed paths do not form a chain complex, because the image of an allowed path under $\p$ need not be allowed. This is rectified as follows. Given a digraph $G=(X,E)$ and any $p\in \Z$, the \emph{space of $\p$-invariant $p$-paths on $G$} is defined to be the following subspace of $\mc{A}_p(G)$:
\begin{align*}
\Om_p &=\Om_p(G)=\Om_p(X,E,\mathbb{K})
:=\set{ c \in \mc{A}_p: \p_p(c)\in \mc{A}_{p-1}}. 
\end{align*}
It follows by the definitions that $\im(\p_p(\Om_p))\subseteq \Om_{p-1}$ for any integer $p \geq -1$. Thus we have a chain complex:
\begin{align*}
\ldots \xr{\p_3} \Om_2 \xr{\p_2} \Om_1 \xr{\p_1} \Om_0 \xr{\p_0} \mb{K} \xr{\p_{-1}} 0 
\end{align*}

For each $p\in \Z_+$, the \emph{$p$-dimensional (reduced) path homology groups (denoted $H_p^\Xi$) of $G=(X,E)$} are defined as:
\begin{align*}
H_p^{\Xi}(G)= H_p^{\Xi}(X,E,\mb{K}):=\ker(\p_p)/\im(\p_{p+1}). 
\end{align*}

Note that this definition of path homology is slightly different from the convention in \cite{Grigoryan2014b}, where path homology refers to a version of the above (the non-reduced version) where $\Om_{-1}$ is defined to be $\{0\}$. 

\subsection{Directed flag complex homology}

The \emph{directed flag complex} of a directed graph $G=(X,E)$ is the collection of finite sequences $(x_0,x_1,\ldots,x_n)$, for $n \in \Z_+$, such that $x_i \r x_j$ whenever $i < j$. Such finite sequences are referred to as \emph{directed $n$-simplices}. For each $p \in \Z_+$, we write $\mc{F}:= \mc{F}_p(G)$ to denote the free vector space over directed $p$-simplices in $G$. Then the boundary map $\p$ from Equation (\ref{eq:bd-map}) can be overloaded to give a map $\p_p: \mc{F}_p \r \mc{F}_{p-1}$. The directed flag complex (DFC) homology (denoted $H_p^\mc{F}$) of $G$ is then defined as: 
\begin{align*}
H_p^{\mc{F}}(G)= H_p^{\mc{F}}(X,E,\mb{K}):=\ker(\p_p)/\im(\p_{p+1}). 
\end{align*}

\section{DFC homology of MLPs}


\begin{proof}[Proof of Theorem \ref{thm:dfchmlp}]

First we observe that $\mc{F}_p(\mlp{L}) = \{0 \}$ for each $p \geq 2$, as there are no ``skip connections" from any layer $i$ to a layer $i+j$ for $j \geq 2$. The remainder of the proof will occur at the level of chain complexes, so we introduce some notation for convenience. We write $C^\mc{F}_*$ to denote the chain complex arising from $\mc{F}_*(\mlp{L})$, and $C^\Delta_*$ to denote the chain complex arising from $\Sigma_*(\mlpgraph{L})$. Also let $\p_p^\mc{F}$ denote the boundary map applied to $C_p^\mc{F}$ and let $\p_p^\Delta$ denote the boundary map applied to $C_p^\Delta$. We will typically overload notation and just use $\p$, but the distinction will occasionally be used to clarify context.
It is immediate that $C_p^\mc{F} = C_p^\Delta$ for $p \leq 0$ and $p \geq 2$, so we only verify this equality for $p = 1$.

Note that $C^\mc{F}_1$ is generated by elements of the form $(v,v')$ where $v \r v'$, and $C^\Delta_1$ is generated by elements of the form $[v,v']$ where either $v \r v'$ or $v' \r v$. By using the identity $[v,v'] = -[v',v]$, we write each chain $\s \in C_1^\Delta$ as $\s = \sum_{i=1}^k c_i [u_i,u_i']$, where $u_i \r u_i'$. The crucial point is that in an MLP, we will only have either $v \r v'$ or $v' \r v$, but not both. Thus the chains of $C_1^\Delta$ as written above are exactly the chains of $C_1^\mc{F}$, and the boundary maps are exactly the same for both $C_1^{\mc{F}}$ and $C_1^\Delta$. Thus we have $H_p^\Delta = H_p^{\mc{F}}$ for $p \geq 0$. The second statement follows from standard results on the Euler characteristic of a connected graph. \qedhere

\end{proof}

\section{Path homology of MLPs}

Prior to providing the proof of Theorem \ref{thm:phmlp}, we digress briefly to highlight an interesting connection. The theory of path homology admits K{\"u}nneth formulas for various digraph constructions \cite{Grigoryan2017}, and one might expect that the layered construction of feedforward neural networks would be amenable to applying such formulas. Indeed, when restricted to feedforward network architectures having two layers, the K{\"u}nneth formula for join applies to give the result in Theorem \ref{thm:phmlp}. However, this approach seems not to work for networks with more layers. The immediate obstruction is that feedforward networks with more than two layers do not arise as the join of the individual layers. 

Attempts to prove a K{\"u}nneth formula for a generalized version of digraph join also seem to fail due to the structure of the boundary map $\p_p$. 
Except for this failure (which is not obvious), a simple proof strategy along the following lines would appear convincing at first: introduce an $L$-ary digraph join and appeal to associativity of tensor products (or more elaborately, to an $L$-ary K{\"u}nneth formula \cite{Hungerford1965}) to argue that
\begin{align*}  
&\rank \lp H_p^\Xi(\mlp{L} )\rp \\
&= \sum_{\alpha \in \mathbb{N}^L : \sum_{\ell = 1}^L \alpha_\ell = p-(L-1)} \prod_{\ell = 1}^L \rank \lp H^\Xi_{\alpha_\ell}(\{1,\dots,n_\ell\} )\rp.
\end{align*} 
From here, the desired result would follow from a straightforward calculation.
It would thus be interesting to see if such a K{\"u}nneth formula could provide an alternative proof of Theorem \ref{thm:phmlp}. We remark that K{\"u}nneth formulae in \emph{persistent homology} have been studied in \cite{carlsson2019persistent, gakhar2019kunneth}.

We now proceed to the proof of Theorem \ref{thm:phmlp}.

\begin{lemma}[Product rule, \cite{Grigoryan2017}] 
\label{lem:prod}
Let $u \in \La_p$ and $v \in \La_q$. Then: 
\[ \p(uv) = (\p u)v + (-1)^{p+1}u(\p v).\]
\end{lemma}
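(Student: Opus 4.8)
The plan is to prove the identity by a direct computation, reducing first to the case of elementary paths. Since the concatenation product $uv$ is bilinear and the boundary map $\p$ is linear, it suffices to verify the formula when $u = (x_0,\ldots,x_p)$ and $v = (y_0,\ldots,y_q)$ are elementary paths; the general case for arbitrary $u \in \La_p$ and $v \in \La_q$ then follows by extending linearly in each argument. Under concatenation, $uv$ is the elementary $(p+q+1)$-path $(x_0,\ldots,x_p,y_0,\ldots,y_q)$.

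First I would apply the boundary map from Equation (\ref{eq:bd-map}) to $uv$, obtaining a signed sum over the $p+q+2$ vertices of the concatenated path. The key step is to partition this sum into two blocks: the terms that delete one of the vertices $x_0,\ldots,x_p$ (occupying positions $0$ through $p$), and the terms that delete one of the vertices $y_0,\ldots,y_q$ (occupying positions $p+1$ through $p+q+1$). For the first block, deleting $x_i$ leaves the path $(x_0,\ldots,\widehat{x_i},\ldots,x_p,y_0,\ldots,y_q)$ with sign $(-1)^i$, which is precisely the $i$-th face of $u$ concatenated with $v$; summing over $i = 0,\ldots,p$ reassembles $(\p u)v$.

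For the second block, deleting $y_j$ occupies position $p+1+j$ in the concatenated path, so it carries sign $(-1)^{p+1+j}$ and yields $u$ concatenated with the $j$-th face of $v$; factoring out the common $(-1)^{p+1}$ leaves $(-1)^{p+1} u (\p v)$. Adding the two blocks produces the claimed Leibniz identity. The only real obstacle is the bookkeeping of signs, specifically tracking that each $y_j$ sits at position $p+1+j$ rather than $j$, which is exactly what produces the global factor $(-1)^{p+1}$ in the second term. No deeper machinery is needed: the result is simply the graded Leibniz rule for the simplicial boundary with respect to concatenation of paths.
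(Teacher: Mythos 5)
Your proof is correct and complete. Note that the paper itself does not prove this lemma at all: it is imported verbatim from \cite{Grigoryan2017}, so there is no in-paper argument to compare against. Your computation --- reduce by bilinearity to elementary paths, split the alternating face sum of the concatenation $(x_0,\ldots,x_p,y_0,\ldots,y_q)$ into the block deleting $x$-vertices (reassembling $(\p u)v$) and the block deleting $y$-vertices (where the offset $p+1$ in position produces the global sign $(-1)^{p+1}$, giving $(-1)^{p+1}u(\p v)$) --- is exactly the standard proof given in the cited reference. One point worth appreciating: this direct computation works precisely because the lemma is stated on $\La_p$, the span of \emph{all} elementary paths; had it been stated on a subcomplex such as regular paths (no consecutive repeated vertices), deleting a vertex could create an illegal path and the face-by-face bookkeeping would need modification. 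In the paper's setting your argument is airtight.
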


\noindent
\textbf{Notation.} We adopt some extra notation for readability. Given (the underlying digraph of) an MLP $\mlp{L}$, we write $\mc{A}^L_p$ to denote $\mc{A}_p(\mlp{L})$. We define $\Om_p^L$ and $\p_p^L$ analogously. Recall that $K_i$ denotes the $i$th layer of $\mlp{L}$.

To prove the theorem, we need to understand the kernel of $\p_{L-1}^L$. The next proposition gives a representation of each such kernel element in terms of an $(L-1)$-layer MLP.

\begin{proposition} 
\label{prop:repn-sum}
Consider the map $\p_{L-1}^L$ defined on $\Om_{L-1}^L$. 
Any element $\g \in \ker(\p_{L-1}^L)$ can be written as a finite sum
\[ \g = \sum_{i=1}^d w_i v_i, \qquad w_i \in \ker(\p_{L-2}^{L-1}),\, v_i \in K_L, \, d \geq 1.\]
\end{proposition}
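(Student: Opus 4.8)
The plan is to decompose $\gamma$ according to the final vertex of each path it involves, and to show that each resulting ``slice'' is itself a cycle in the truncated $(L-1)$-layer network.

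First I would record a structural fact that is forced by the feedforward architecture: since $\mlp{L}$ has edges only between consecutive layers $K_i \to K_{i+1}$, any allowed path $(x_0,\ldots,x_{L-1})$ with $L$ vertices must satisfy $x_k \in K_{k+1}$ for all $k$, i.e.\ it is ``straight-through,'' passing through every layer exactly once. Hence $\mc{A}_{L-1}^L$ is spanned by such paths, and each factors as a concatenation $w\,v$ where $w=(x_0,\ldots,x_{L-2})$ is an allowed $(L-2)$-path in the sub-MLP $\mlp{L-1}$ on layers $K_1,\ldots,K_{L-1}$ and $v=x_{L-1}\in K_L$. Grouping the generators of $\gamma$ by their last vertex therefore gives a canonical expression $\gamma=\sum_{v\in K_L}\gamma_v\,v$ with each $\gamma_v\in\mc{A}_{L-2}^{L-1}$. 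It remains only to show $\gamma_v\in\ker(\p_{L-2}^{L-1})$ for every $v$.

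To do this I would apply the product rule (Lemma \ref{lem:prod}) with $u=\gamma_v\in\La_{L-2}$ and the vertex $v\in\La_0$, using that $\p v$ is the generator of $\La_{-1}=\mb{K}$ and that right-concatenation with it is the identity, to obtain $\p(\gamma_v v)=(\p\gamma_v)\,v+(-1)^{L-1}\gamma_v$. The decisive point is a separation by final layer: every basis path occurring in $(\p\gamma_v)\,v$ ends at $v\in K_L$, whereas every basis path occurring in $\gamma_v$ ends in $K_{L-1}$. Since $K_{L-1}$ and $K_L$ are disjoint, these two families are linearly independent inside $\La_{L-2}$, so the hypothesis $\p\gamma=\sum_v\big[(\p\gamma_v)\,v+(-1)^{L-1}\gamma_v\big]=0$ splits into the two independent relations $\sum_v(\p\gamma_v)\,v=0$ and $\sum_v\gamma_v=0$ (the latter is a bonus relation not needed here). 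From the first relation, distinct $v$ contribute paths with distinct final vertices and concatenation with a fixed $v$ is injective, so $(\p\gamma_v)\,v=0$ forces $\p\gamma_v=0$ for each $v$. Thus $\gamma_v\in\ker(\p_{L-2}^{L-1})$, and discarding any zero slices yields $\gamma=\sum_{i=1}^d w_i v_i$ of the claimed form.

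The main obstacle is the boundary bookkeeping: one must be certain that the ``interior-omission'' terms of $\p\gamma$ (which skip a layer and are non-allowed) do not secretly cancel against the allowed terms, and more precisely that the whole boundary cleanly separates into a block ending in $K_L$ and a block ending in $K_{L-1}$. The straight-through structure of top-dimensional allowed paths is exactly what guarantees this separation, and it is the step that genuinely uses the layered (feedforward, fully-connected) architecture; once it is in place, the product rule together with injectivity of concatenation closes the argument with no further calculation.
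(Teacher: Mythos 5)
Your proposal is correct and follows essentially the same route as the paper's own proof: decompose $\g$ by the final vertex in $K_L$ (the paper's ``WLOG the $v_i$ are distinct''), apply the product rule of Lemma \ref{lem:prod}, split the resulting equation $\p\g = 0$ into two relations by linear independence, and use distinctness of the final vertices to conclude each slice lies in $\ker(\p_{L-2}^{L-1})$. The only difference is expository: you spell out the ``straight-through'' structure of top-dimensional allowed paths and the layer-separation justification for the linear-independence split, which the paper compresses into single phrases.
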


\begin{proof}
By the structure of $\mlp{L}$, all $(L-1)$-paths have the form $(v^{(1)}v^{(2)}\ldots v^{(L)})$, where each $v^{(i)} \in K_i$. Thus any element in $\ker(\p_{L-1}^L)$ has the form 
\[ \g = \sum_{i=1}^d w_iv_i \in \Om_{L-1}(\mlp{L}), \qquad \text{$w_i \in \mc{A}^{L-1}_{L-2}$, $v_i \in K_L$}.\]
Here we assume WLOG that the $v_i$ are distinct.
By Lemma \ref{lem:prod}:
\begin{align*}
0 = \p (\g) = \sum_{i=1}^d (\p w_i)v_i + (-1)^{L-1} w_i. 
\end{align*}
By linear independence, we must have $(-1)^{L-1}\sum_{i=1}^d w_i = 0$ and $\sum_{i=1}^d   (\p w_i)v_i = 0$. 
Since the $v_i$ are all distinct, we have $\p w_i = 0$ and hence $w_i \in \ker(\p_{L-2}^{L-1})$ for each $1\leq i \leq d$. This concludes the proof. \qedhere 

\end{proof}

The next proposition further clarifies the preceding representation as a difference of basis terms. 

\begin{proposition} 
\label{prop:repn-diff}
Consider the map $\p_{L-2}^{L-1}$ defined on $\Om_{L-2}^{L-1}$, and let $B_{L-2}$ be a basis for $\ker(\p_{L-2}^{L-1})$. 
Let $\g \in \ker(\p_{L-1}^L)$.  Then we can write
\[ \g = \sum_{i=1}^{|B_{L-2}|} \sum_{j,k=1}^{|K_L|}  c_{ijk} u_i (v_j - v_k),\]
where $c_{ijk} \in \mb{K}$, $u_i \in B_{L-2}$, and $v_j,v_k  \in K_L$.
\end{proposition}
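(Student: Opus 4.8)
The plan is to refine the representation from Proposition~\ref{prop:repn-sum} using two ingredients: an extra constraint that was actually established inside the proof of that proposition, and a standard fact about zero-sum combinations of basis vectors. First I would extract from the proof of Proposition~\ref{prop:repn-sum} not just that each $w_i \in \ker(\partial_{L-2}^{L-1})$, but also that $\sum_{i=1}^d w_i = 0$: applying Lemma~\ref{lem:prod} to $\partial\gamma = 0$ yields both $(-1)^{L-1}\sum_i w_i = 0$ and $\sum_i (\partial w_i) v_i = 0$, the first of which gives exactly this vanishing sum. So I may start from $\gamma = \sum_{i=1}^d w_i v_i$ with the $v_i \in K_L$ distinct, each $w_i \in \ker(\partial_{L-2}^{L-1})$, and $\sum_i w_i = 0$.

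Next I would expand each $w_i$ in the basis $B_{L-2}$, writing $w_i = \sum_{u \in B_{L-2}} a_{iu}\, u$ with $a_{iu} \in \mathbb{K}$. Substituting and interchanging the order of summation gives $\gamma = \sum_{u \in B_{L-2}} u\bigl(\sum_{i=1}^d a_{iu} v_i\bigr)$. The vanishing-sum constraint becomes $\sum_{u}\bigl(\sum_i a_{iu}\bigr) u = 0$, and since $B_{L-2}$ is linearly independent this forces $\sum_{i=1}^d a_{iu} = 0$ for every $u \in B_{L-2}$. Hence, for each fixed $u$, the inner chain $\sum_i a_{iu} v_i$ is a zero-sum combination of vertices of $K_L$.

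The last step is the elementary observation that any zero-sum combination $\sum_m b_m v_m$ (with $\sum_m b_m = 0$) equals $\sum_m b_m (v_m - v_1)$, hence lies in the span of the differences $\{v_j - v_k : v_j, v_k \in K_L\}$. Applying this to each inner sum lets me rewrite $\sum_i a_{iu} v_i = \sum_{j,k} d^{(u)}_{jk}(v_j - v_k)$, and collecting terms yields $\gamma = \sum_{u \in B_{L-2}} \sum_{j,k} c_{ujk}\, u(v_j - v_k)$, which is precisely the claimed form once $u$ is enumerated as $u_i$ over $B_{L-2}$ and the coefficients are written with the matching index.

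The only genuinely delicate point, and the one I expect to be the main obstacle, is ensuring the zero-sum constraint $\sum_i w_i = 0$ is genuinely available and carried through the change of basis: without it the inner combinations would have nonzero total coefficient and could not be expressed through differences alone, so the conclusion would fail. Since this constraint lives in the proof rather than the statement of Proposition~\ref{prop:repn-sum}, I would take care to re-derive it explicitly. The remaining work is routine bookkeeping, including padding the distinct $v_i$ with zero coefficients so that $j,k$ may range over all of $K_L$.
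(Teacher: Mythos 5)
Your proof is correct, but it takes a genuinely different route from the paper's. The paper proves this proposition by phrasing the rearrangement of coefficients as a supply--demand (transportation) problem: after grouping terms with a common $u_i$, it splits the index set into positive and negative parts $I_i^+ \sqcup I_i^-$, constructs an explicit matrix $T^i_{jk} = r_j s_k / \sum_l c_l$ whose row and column sums match the supplies and demands, verifies the marginals, and then handles the degenerate case $\sum_{j \in I_i^+} c_j = 0$ by recursively subdividing the index set until the construction applies. You avoid all of this: you extract the constraint $\sum_i w_i = 0$ from the proof of Proposition~\ref{prop:repn-sum} (correctly noting it lives in the proof, not the statement, and re-deriving it via Lemma~\ref{lem:prod} and the disjointness of supports of paths ending in $K_{L-1}$ versus $K_L$), expand each $w_i$ in the basis $B_{L-2}$, use linear independence to force each inner coefficient sum $\sum_i a_{iu}$ to vanish, and then invoke the elementary fact that a zero-sum combination $\sum_m b_m v_m$ equals $\sum_m b_m (v_m - v_1)$. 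Notably, your argument is essentially the one the paper itself uses for the \emph{next} result, Proposition~\ref{prop:repn-1-diff}, and in fact it proves that stronger statement directly (all differences taken against the single vertex $v_1$), making the transportation construction and its degenerate-case recursion unnecessary for the logical development. What the paper's approach buys is an explicit nonnegative coupling matrix interpolating between the positive and negative masses --- structure with an optimal-transport flavor that your argument discards --- but as a proof of the stated proposition, yours is simpler, complete, and subsumes the paper's subsequent refinement.
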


\begin{proof} 
Using Proposition \ref{prop:repn-sum}, we write $\g = \sum_{i=1}^d c_i u_i v_i$, where $d \geq 1$, $c_i \in \mb{K}$, $u_i \in B_{L-2}$, and $v_i \in K_L$. This allows for degeneracy, in the sense that we may have $u_i = u_j$ for $i \neq j$, and likewise for $c_i$ and $v_i$. By Lemma \ref{lem:prod}, we obtain $0 = \p(\g) = \sum_{i=1}^d c_i u_i$. Here we have used the relation $\p(u_i) = 0$. 

Next fix $i =1$, and let $I_1 \subseteq \{1,2,\ldots, d\}$ denote the indices $j$ for which $u_j = u_1$. Since $\sum_{i=1}^d c_i u_i =0$, we have by linear independence that $\sum_{j \in I_1} c_j = 0$. 
Next let $I_1^+ \sqcup I_1^-$ be a partition of $I_1$ into nonempty sets. Then we have $\sum_{j \in I_1^+}c_j = \sum_{k \in I_1^-}-c_k$ by the preceding observation. It follows that
\[ \sum_{j \in I_1} c_j u_j v_j = \sum_{j \in I_1} c_j u_1 v_j = \sum_{j \in I_1^+} c_j u_1 v_j - \sum_{k \in I_1^-} (-c_k) u_1 v_k.\] 
We would like to write the latter as a sum of elements of the form $c_{jk} u_1 (v_j - v_k)$. 
The problem of determining the coefficients $c_{jk}$ can be phrased as a supply-demand problem, and we present this next. For now we assume $\sum_{j \in I_1^+}c_j = \sum_{k \in I_1^-}-c_k \neq 0$. 

Define the \emph{supply} vector $r$ to be an $|I_1^+| \times 1$ column vector with entries $c_j$, $j \in I_1^+$. Also define the \emph{demand} vector $s$ to be the $1 \times |I_1^-|$ row vector with entries $-c_k$, $k \in I_1^-$. We need to construct a nonnegative $|I_1^+| \times |I_1^-|$ matrix $T^1$ with row and column sums equal to $r$ and $s$, respectively. Define $T^1$ by writing $T^1_{jk} := r_js_k/ \sum_{l \in I_1^+}c_l$ for each $j,k$.

To verify that $T^1$ has the desired row and column sums, recall that $\sum_{k \in I_1^-} s_k = \sum_{k \in I_1^-}-c_k = \sum_{j \in I_1^+} c_j$, and so
\[\sum_{k \in I_1^-} T^1_{jk} = r_j \sum_{k \in I_1^-} s_k/ \sum_{l \in I_1^+}c_l =  r_j \sum_{j \in I_1^+} c_j/ \sum_{l \in I_1^+}c_l = r_j.\]
Similarly, recall $\sum_{j \in I_1^+} r_j = \sum_{j \in I_1^+} c_j$, and so
\[\sum_{j \in I_1^+} T^1_{jk} = s_k \sum_{j \in I_1^+} r_j/ \sum_{l \in I_1^+}c_l =  s_k \sum_{j \in I_1^+} c_j/ \sum_{l \in I_1^+}c_l = s_k.\]
Thus we obtain:
\[ \sum_{j \in I_1^+} c_j u_1 v_j - \sum_{k \in I_1^-} (-c_k) u_1 v_k = \sum_{j \in I_1^+}\sum_{k \in I_1^-} T^1_{jk}u_1(v_j - v_k).\]
It may be the case that $v_j = v_{j'}$ for $j \neq j' \in I_1^+$, and likewise for $I_1^-$. By summing such terms and by padding $T^1$ with zeros if necessary, we create a $|K_L| \times |K_L|$ matrix $C^1$ satisfying:
\begin{align} 
\sum_{j \in I_1^+}\sum_{k \in I_1^-} T^1_{jk}u_1(v_j - v_k) = \sum_{j,k = 1}^{|K_L|} C^1_{jk} u_1 (v_j - v_k). \label{eq:OT}
\end{align}

Now we return to the case $\sum_{j \in I_1^+}c_j = \sum_{k \in I_1^-}-c_k = 0$. In this case, we further subdivide $I_1^+$ into nonempty sets $I_1^{++} \sqcup I_1^{+-}$. If $\sum_{j \in I_1^{++}}c_j = \sum_{k \in I_1^{+-}}-c_k \neq 0$, then we can proceed as before to obtain a decomposition as in Equation (\ref{eq:OT}), and otherwise we can continue subdividing the index set. Since there are only finitely many terms, the subdivision operation must terminate in a finite number of steps. Similarly one subdivides $I_1^-$ as necessary to collect terms in the form of Equation (\ref{eq:OT}). 

Repeating this process for $i = 2,\ldots, d$, we obtain:
\[ \g =  \sum_{i=1}^d c_i u_i v_i = \sum_{i=1}^{|B_{L-2}|} \sum_{j,k =1}^{|K_L|} C^i_{jk} u_i(v_j - v_k). \qedhere\]

\end{proof}

\begin{proposition}
\label{prop:repn-1-diff}
Given the setup of Proposition \ref{prop:repn-diff} and $\g \in \ker(\p_{L-1}^L)$, we can further write:
\[ \g = \sum_{i=1}^{|B_{L-2}|} \sum_{j = 2}^{|K_L|}  c_{ij} u_i (v_1 - v_j),\]
where $c_{ij} \in \mb{K}$, $u_i \in B_{L-2}$, and $v_j  \in K_L$.
Consequently, $B_{L-1}:= \set{ u_i(v_1 - v_j) : u_i \in B_{L-2}, \, 2\leq j \leq |K_L|}$ forms a basis for $\ker(\p_{L-1}^L)$. 
\end{proposition}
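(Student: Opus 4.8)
The plan is to split the argument into a representation step and a basis step, with spanning handed to us almost for free by Proposition \ref{prop:repn-diff}. For the representation, I would start from the expression $\g = \sum_{i,j,k} c_{ijk} u_i(v_j - v_k)$ guaranteed by Proposition \ref{prop:repn-diff} and fix a reference vertex $v_1 \in K_L$. The elementary identity $v_j - v_k = (v_1 - v_k) - (v_1 - v_j)$, valid in $\La_0(K_L)$, lets me rewrite every difference in terms of the common base point $v_1$. Substituting this termwise and using bilinearity of the concatenation product collapses the double sum over $(j,k)$ into a single sum $\sum_i \sum_{j=2}^{|K_L|} c_{ij} u_i(v_1 - v_j)$, which is exactly the claimed form; this already yields that $B_{L-1}$ spans $\ker(\p_{L-1}^L)$.

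Next I would verify that each candidate generator actually lies in the kernel, so that $B_{L-1} \subseteq \ker(\p_{L-1}^L)$. Applying the product rule (Lemma \ref{lem:prod}) with $u_i \in \La_{L-2}$ and a single vertex $v \in \La_0$ gives $\p(u_i v) = (\p u_i) v + (-1)^{L-1} u_i (\p v)$. Since $u_i \in B_{L-2} \subseteq \ker(\p_{L-2}^{L-1})$ we have $\p u_i = 0$, and in reduced homology $\p v = 1$, so $\p(u_i v) = (-1)^{L-1} u_i$ independently of $v$. Hence $\p\lp u_i(v_1 - v_j)\rp = (-1)^{L-1}u_i - (-1)^{L-1}u_i = 0$. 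Allowedness of $u_i v$ (and thus membership in $\Om_{L-1}^L$) is automatic because every vertex of $K_{L-1}$ connects to every vertex of $K_L$, so the boundary $0$ lies in $\mc{A}^{L-1}_{L-2}$.

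The remaining and most delicate step is linear independence. The structural fact I would lean on is that the allowed $(L-1)$-paths of $\mlp{L}$ are in bijection with pairs consisting of an allowed $(L-2)$-path of $\mlp{L-1}$ and a terminal vertex of $K_L$, via concatenation; consequently, since $B_{L-2}$ is linearly independent in $\mc{A}^{L-1}_{L-2}$, the family $\set{u_i v_k : u_i \in B_{L-2},\, v_k \in K_L}$ is linearly independent in $\mc{A}^L_{L-1}$, with distinct terminal vertices contributing to disjoint families of basis paths. Granting this, I would suppose $\sum_i \sum_{j=2}^{|K_L|} c_{ij} u_i(v_1 - v_j) = 0$, expand and regroup by terminal vertex to obtain $\sum_i \lp \sum_{j=2}^{|K_L|} c_{ij}\rp u_i v_1 - \sum_i \sum_{j=2}^{|K_L|} c_{ij} u_i v_j$, and then read off the coefficient of each $u_i v_j$ with $j \geq 2$ to force $c_{ij} = 0$. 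Combined with spanning, this makes $B_{L-1}$ a basis.

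I expect the main obstacle to be the rigorous justification of the independence of the concatenated family $\set{u_i v_k}$: one must argue that appending a vertex from $K_L$ is an injective, grading-respecting operation that keeps paths ending in distinct vertices linearly separate. This is precisely where the complete-bipartite connectivity between consecutive MLP layers is used, ensuring that all concatenations remain allowed and that the terminal-vertex grading genuinely partitions the basis of $\mc{A}^L_{L-1}$.
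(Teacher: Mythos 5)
Your proof is correct, but it takes a genuinely different route to the representation than the paper, and it is more thorough on the basis claim. The paper never invokes the conclusion of Proposition \ref{prop:repn-diff}: its proof re-runs the opening steps (Proposition \ref{prop:repn-sum} plus the kernel constraint $\sum_{i=1}^d c_i u_i = 0$), collects terms as $\g = \sum_i u_i \sum_j c_{ij} v_j$, inserts $v_j = (v_j - v_1) + v_1$, and kills the residual term $\lp \sum_{i,j} c_{ij} u_i \rp v_1$ precisely because $\sum_i c_i u_i = 0$. You instead take Proposition \ref{prop:repn-diff} at face value and re-anchor every difference via the identity $v_j - v_k = (v_1 - v_k) - (v_1 - v_j)$, discarding the vanishing terms $u_i(v_1 - v_1)$. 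This is purely algebraic, needs no further appeal to the kernel condition, and is arguably the more natural use of the preceding proposition (which the paper's own proof logically bypasses); the paper's route, in exchange, avoids any bookkeeping with the coefficients $c_{ijk}$. More importantly, you explicitly verify the two claims hidden behind the paper's ``Consequently'': that each generator $u_i(v_1 - v_j)$ actually lies in $\ker(\p_{L-1}^L)$ (via Lemma \ref{lem:prod} with $\p u_i = 0$ and $\p v = 1$ in the reduced complex, together with allowedness coming from the complete bipartite connectivity between $K_{L-1}$ and $K_L$), and that the proposed generators are linearly independent (via the bijection between allowed $(L-1)$-paths of $\mlp{L}$ and pairs consisting of an allowed $(L-2)$-path of $\mlp{L-1}$ and a terminal vertex in $K_L$, then reading off the coefficients of $u_i v_j$ for $j \geq 2$). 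The paper leaves both points implicit, so your write-up is the more complete argument for the statement as phrased; the only cost is its dependence on Proposition \ref{prop:repn-diff}, which the paper's self-contained derivation does not incur.
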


\begin{proof}
We repeat the first few steps of Proposition \ref{prop:repn-diff}; namely we use Proposition \ref{prop:repn-sum} to write $\g = \sum_{i=1}^d c_i u_i v_i$, observe $\sum_{i=1}^d c_i u_i = 0$, and collect $u_i$ terms to write 
\[ \g =  \sum_{i=1}^{|B_{L-2}|} u_i \sum_{j=1}^{|K_L|} c_{ij} v_j.\]
We further write
\begin{align*}
 \g =\sum_{i=1}^{|B_{L-2}|} u_i \lp \sum_{j=1}^{|K_L|} c_{ij} (v_j - v_1) + \sum_{j=1}^{|K_L|} c_{ij} v_1 \rp
\end{align*}
and observe that, by the preceding observation:
\[ \sum_{i=1}^{|B_{L-2}|} u_i \sum_{j=1}^{|K_L|} c_{ij} v_1 = \sum_{i=1}^d c_i u_i v_1 =0. \qedhere\]

\end{proof}

\begin{proposition} 
\label{prop:2lp}
Let $K$ be a two-layer MLP, and consider the boundary map $\p_1^2$ on $\Om_1(K)$. Let $u_1,u_2,\ldots, u_{n_1}$ and $v_1,v_2,\ldots, v_{n_2}$ denote the vertices of the first and second layers of $K$, respectively. Then $\ker(\p_1^2)$ is generated by the elements $\set{(u_1 - u_j)(v_1 - v_k) : 2\leq j \leq n_1,\, 2\leq k \leq n_2}$. In particular, $\dim(\ker(\p_1^2)) = (n_1 - 1)(n_2 - 1)$.
\end{proposition}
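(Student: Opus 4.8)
The plan is to reduce the statement to a concrete linear-algebra computation on $\Om_1(K)$ and then match dimensions against the proposed generators. First I would pin down $\Om_1(K)$ itself. Since the only edges of $K$ run from the first layer to the second, the allowed $1$-paths are exactly the products $u_i v_j$ for $1 \le i \le n_1$, $1 \le j \le n_2$. Moreover $\p(u_i v_j) = v_j - u_i \in \mc{A}_0$ automatically, so every allowed $1$-path is $\p$-invariant and $\Om_1(K) = \mc{A}_1(K)$, the free vector space on $\set{u_i v_j}$, of dimension $n_1 n_2$.

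Next I would compute the boundary of a general element $c = \sum_{i,j} c_{ij}\, u_i v_j$, obtaining $\p(c) = \sum_j \lp\sum_i c_{ij}\rp v_j - \sum_i \lp\sum_j c_{ij}\rp u_i$. Because the $u_i$ and $v_j$ are linearly independent in $\mc{A}_0$, the condition $c \in \ker(\p_1^2)$ is equivalent to requiring that the coefficient matrix $(c_{ij})$ have every row sum and every column sum equal to zero. I would then count the dimension of this solution space: the row-sum equations give $n_1$ constraints and the column-sum equations give $n_2$, but they are not independent, since summing all the row sums and summing all the column sums both yield the total of all entries; a short check shows this is the only relation, so the constraints have rank $n_1 + n_2 - 1$ and $\dim\ker(\p_1^2) = n_1 n_2 - (n_1 + n_2 - 1) = (n_1-1)(n_2-1)$.

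Finally I would confirm that the listed elements form a basis. Expanding, $(u_1 - u_j)(v_1 - v_k) = u_1 v_1 - u_1 v_k - u_j v_1 + u_j v_k$, whose coefficient matrix visibly has vanishing row and column sums, so each lies in $\ker(\p_1^2)$; there are $(n_1-1)(n_2-1)$ of them, matching the dimension. To finish, it suffices to show they span. The cleanest route is an explicit reconstruction: given a kernel element with matrix $(c_{ij})$, I would verify that $c = \sum_{j,k \ge 2} c_{jk}\,(u_1 - u_j)(v_1 - v_k)$ by comparing coefficients entry by entry, using the row- and column-sum conditions to recover the boundary coefficients $c_{j1}$, $c_{1k}$, and $c_{11}$ from the interior coefficients $c_{jk}$ with $j,k \ge 2$.

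The main subtlety is the single dependency among the constraints: the fact that the row-sum and column-sum equations share exactly one relation is precisely what upgrades the naive count $n_1 n_2 - n_1 - n_2$ to the product $(n_1-1)(n_2-1)$. On the generator side this is mirrored by the observation that the term $u_j v_k$ (for $j,k \ge 2$) occurs in one and only one of the proposed generators, which simultaneously forces their linear independence and makes the reconstruction formula above well defined. I expect the bookkeeping of these boundary coefficients, rather than any conceptual difficulty, to be the only place requiring care.
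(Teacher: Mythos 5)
Your proof is correct, but it takes a genuinely different route from the paper. The paper treats this proposition as the $L=2$ instance of its inductive machinery: it first notes that, in reduced homology, $\ker(\p_0^1)$ for the single layer $K_1$ has basis $\set{u_1 - u_j : 2 \leq j \leq n_1}$ (since $\p_0$ maps onto $\mb{K}$, the kernel consists of coefficient vectors summing to zero), and then simply invokes Proposition \ref{prop:repn-1-diff} to produce the product basis $\set{(u_1-u_j)(v_1-v_k)}$ — a two-line argument, but one resting on the supply--demand decomposition developed in Propositions \ref{prop:repn-sum}--\ref{prop:repn-1-diff}. You instead compute everything directly: you identify $\Om_1(K) = \mc{A}_1(K)$ (correctly, since $\p(u_iv_j) = v_j - u_i$ is always allowed), translate $\ker(\p_1^2)$ into the condition that the coefficient matrix $(c_{ij})$ have all row and column sums zero, and then exhibit the explicit reconstruction $c = \sum_{j,k\geq 2} c_{jk}(u_1-u_j)(v_1-v_k)$, which together with the observation that each $u_jv_k$ ($j,k \geq 2$) appears in exactly one generator gives both spanning and independence. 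Your remark that the rank-counting step ("this is the only relation") needs care is moot, since the reconstruction formula independently establishes spanning and hence the dimension. What your approach buys is self-containment and an explicit coordinate picture of the kernel (zero-row-sum, zero-column-sum matrices); what the paper's approach buys is economy and structural consistency, since the same Proposition \ref{prop:repn-1-diff} that proves the base case also drives the induction in Theorem \ref{thm:phmlp}, whereas your argument, being specific to two layers, would still need that machinery (or a separate induction) for the general theorem.
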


\begin{proof}
Let $K_1$ denote the first layer of $K$, and recall that $\p_0^1$ denotes the boundary map defined on $\Om_0(K_1)$. Since we are computing reduced homology, $\ker(\p_0^1)$ is generated by elements of the form $u_j - u_k$. We rewrite this as $u_j - u_k = u_j - u_1 + u_1 - u_k = -(u_1 - u_j) + (u_1 - u_k)$. Thus a basis for $\ker(\p_0^1)$ is given by $\set{u_1 - u_j : 2\leq j \leq |K_1|}$. An application of Proposition \ref{prop:repn-1-diff} completes the proof.
\end{proof}

\begin{proposition}
\label{prop:hom-vanish}
Given an MLP $\mlp{L}$ with $L$ layers, we have $\ker(\p^L_{j}) = \im(\p^L_{j+1})$ for each $0\leq j \leq L-2$.  
\end{proposition}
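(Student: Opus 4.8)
The plan is to prove the equivalent statement that the reduced path homology $\pathhom_j(\mlp{L})$ vanishes for every $0 \le j \le L-2$, by induction on the number of layers $L$, peeling off the last layer $K_L$ at each step; the base case $L \le 2$ is just reduced-$\pathhom_0$ vanishing, i.e. connectivity of the underlying graph. Two structural observations drive the argument. First, since every edge of $\mlp{L}$ joins consecutive layers, an allowed $p$-path occupies $p+1$ consecutive layers, so $\mc{A}_p = \Om_p^L = 0$ for $p \ge L$ and the complex is supported in degrees $-1 \le p \le L-1$; in particular $\pathhom_{L-1}(\mlp{L}) = \ker(\p_{L-1}^L)$. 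Second, $\Om_p^L$ splits as a direct sum indexed by the starting layer of a path, and $\p_p^L$ decomposes accordingly into a delete-source map $d^{+}$ (which raises the starting layer) and a delete-sink map $d^{-}$ (which fixes it), the interior faces cancelling by $\p$-invariance; one checks $(d^{+})^2 = (d^{-})^2 = 0$ and $d^{+}d^{-} + d^{-}d^{+} = 0$, so $(\Om_*^L,\p)$ is the total complex of a bicomplex.

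Next I would set up the short exact sequence of chain complexes
\[ 0 \to \Om_*(\mlp{L-1}) \to \Om_*^L \to C_* \to 0, \]
where the subcomplex consists of the paths lying entirely in the first $L-1$ layers (these are exactly the allowed $\p$-invariant paths of the sub-MLP $\mlp{L-1}$), and the quotient $C_*$ consists of the paths terminating in $K_L$, carrying the induced differential $d^{+}$ (the delete-sink term lands in the subcomplex). Taking the long exact sequence and invoking the inductive hypothesis, that $\pathhom_*(\mlp{L-1})$ is concentrated in degree $L-2$, the homology of $\mlp{L}$ in degrees $j \le L-3$ sits between $\pathhom_j(\mlp{L-1}) = 0$ and $H_j(C_*)$. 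The whole argument therefore reduces to understanding $H_*(C_*)$ together with a single boundary degree.

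The crux is to show that $H_*(C_*)$ is itself concentrated in the top degree $L-1$. Reversing all edges identifies $C_*$ with the start-at-the-first-layer, delete-sink complex of the reversed MLP, a one-sided complex that I would analyse by a secondary induction removing its last layer: the complex is unchanged below the top degree, so concentration follows once one shows that the delete-sink map out of the top paths surjects onto the top cycles of the truncation. This surjectivity says exactly that every top cycle can be lifted by appending vertices of the final layer, and it is established by the same supply--demand/basis bookkeeping used in Propositions \ref{prop:repn-sum}--\ref{prop:2lp} (with Lemma \ref{lem:prod} governing how $d^{\pm}$ interact with appending a vertex). Granting this, the long exact sequence annihilates $\pathhom_j(\mlp{L})$ for $j \le L-3$, and the remaining degree $j = L-2$ is closed by a dimension count: the exact segment linking $\pathhom_{L-1}(\mlp{L})$, $H_{L-1}(C_*)$, $\pathhom_{L-2}(\mlp{L-1})$ and $\pathhom_{L-2}(\mlp{L})$ forces the last term to vanish once the first three dimensions are known, and $\dim \pathhom_{L-1}(\mlp{L}) = \dim\ker(\p_{L-1}^L) = \prod_{i=1}^L (n_i-1)$ is already pinned down by the basis $B_{L-1}$ of Proposition \ref{prop:repn-1-diff}.

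I expect the main obstacle to be precisely the concentration of the auxiliary one-sided complex $C_*$, equivalently the surjectivity of the top delete-sink map onto the cycles of the truncation: this is where the $\p$-invariance constraints genuinely bite and where the argument must mirror the care taken in the earlier propositions. A secondary but real nuisance is the reduced term $\Om_{-1} = \mb{K}$ at the bottom of the complex, which does not respect the starting-layer splitting and so must be tracked separately in degree $0$; getting this bookkeeping right is essential, since a naive splitting there produces inconsistent (even negative) Betti-number estimates.
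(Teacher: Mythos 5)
Your high-level strategy --- induction on $L$ via the short exact sequence $0 \to \Om_*(\mlp{L-1}) \to \Om_*^L \to C_* \to 0$ and its long exact sequence --- is genuinely different from the paper's argument and can be completed, but as written it has a real gap exactly where you predict it: the concentration of $H_*(C_*)$ in degree $L-1$ is never proven, and the plan you sketch (reverse edges, secondary induction, surjectivity via ``supply--demand bookkeeping'') points at the wrong tool, since the supply--demand construction of Propositions \ref{prop:repn-diff} and \ref{prop:repn-1-diff} produces a basis of the top kernel and proves no vanishing statement. What actually closes this step is a one-line use of Lemma \ref{lem:prod}: if $c \in C_p$, $p \le L-2$, is a cycle for the induced differential, then every elementary path in $c$ runs from $K_{L-p}$ to $K_L$, so for any $z' \in K_{L-(p+1)}$ the chain $z'c$ is allowed, lies in $\Om_{p+1}^L$, and satisfies $\p(z'c) = c - z'(\p c)$; since being a cycle in the quotient means $\p c$ is concentrated on paths ending in $K_{L-1}$, the term $z'(\p c)$ dies in the quotient and $[c] = \p[z'c]$, so $H_p(C_*) = 0$. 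A second hole: your dimension count in degree $L-2$ needs the value of $\dim H_{L-1}(C_*)$, which you list among the ``known'' quantities but never compute; it is the kernel of only the delete-source part of $\p$ on top chains, hence strictly larger than $\ker(\p_{L-1}^L)$. Writing a top chain as $\sum_{v \in K_L} w_v v$ and arguing as in Proposition \ref{prop:repn-sum}, the induced cycle condition is $\p w_v = 0$ for each $v$ separately, so $\dim H_{L-1}(C_*) = n_L \prod_{i=1}^{L-1}(n_i-1)$; only with this number does your exact segment force $\pathhom_{L-2}(\mlp{L}) = 0$. (You should also record why your sequence is exact: elementary paths ending in $K_L$ cannot cancel against ones that do not, so the $\p$-invariance condition decouples across the two groups; this is routine but is doing real work.)

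By contrast, the paper proves the proposition in a single pass at chain level, with no exact sequences and no induction on the number of layers: given $\g \in \ker(\p_j^L)$, decompose it by terminal layer as $\g = \sum_{l=j+1}^L \g_l$, append a vertex $z_{l+1} \in K_{l+1}$ to each component with $l < L$, prepend a vertex $z' \in K_{L-(j+1)}$ to $\g_L$, and verify with Lemma \ref{lem:prod} that the resulting $(j+1)$-chain maps onto $\g$. In other words, the prepend/append trick that fills your main gap \emph{is} essentially the entire published proof, and the SES/LES scaffolding is dispensable. One merit of your more careful $d^{+}/d^{-}$ bookkeeping is worth noting, however: the terminal-layer components of a cycle need not individually be cycles, because source-deletions of $\g_l$ can cancel against sink-deletions of $\g_{l+1}$. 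For instance, in a $(2,2,2)$ network with layers $\{a_1,a_2\}$, $\{b_1,b_2\}$, $\{c_1,c_2\}$, the chain $\g = a_1b_1 - a_1b_2 + b_1c_1 - b_2c_1$ satisfies $\p\g = 0$ while its layer-$2$ component has $\p(a_1b_1 - a_1b_2) = b_1 - b_2 \neq 0$. The paper's proof asserts $\p(\g_l) = 0$ for each $l$ ``by linear independence,'' which this example contradicts, so the published construction needs a correction (e.g.\ peeling off the $K_L$-component by prepending and inducting, or your long-exact-sequence formalism); your decomposition into delete-source and delete-sink maps records precisely the coupling that is being glossed over. So: fill the two gaps above and your route gives a complete --- if longer --- proof, and one that is more robust on exactly this point.
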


\begin{proof} 
Let $0 \leq j \leq L-2$, and let $\g \in \ker(\p^L_j)$. Then $\g$ is a linear combination of paths of length $(j+1)$. The endpoints of these paths may belong to layers $K_{j+1}, \ldots, K_L$. By collecting paths ending at the same layer, write $\g = \sum_{l=j+1}^L \g_l$, where each $\g_l$ consists of the summands of $\g$ ending at $K_l$ (and is 0 if there are no such summands). By linear independence, we must individually have $\p(\g_l) =0$ for each $k+1 \leq l \leq L$.  

Let $k+1 \leq l < L$, and let $z_{l+1} \in K_{l+1}$. By Lemma \ref{lem:prod},  
\begin{align*}
\p(\g_l z_{l+1}) = (\p \g_l) z_{l+1} + (-1)^{j+1}\g_l(\p z_{l+1}) = (-1)^{j+1} \g_l.
\end{align*}

Next fix $l = L$, and note that $\g_L$ is a linear combination of paths that start at $K_{L-j}$ and end at $K_L$. Fix $z' \in K_{L- (j+1)}$. Then we have: 
\begin{align*}
\p(z' \g_L) = (\p z')\g_L + (-1)z'(\p\g_L) = \g_L.  
\end{align*}

Finally define $\zeta = \sum_{l = j+1}^{L-1} (-1)^{j+1} \g_l z_{l+1} + z'\g_L.$ By the previous work, we have $\p(\zeta) = \sum_{l = j+1}^L \g_l = \g$. By construction, $\zeta \in \Om^L_{j+1}$. This shows that $\ker(\p^L_{j}) = \im(\p^L_{j+1})$ and concludes the proof.\qedhere

\end{proof}

Now we proceed to the proof of Theorem \ref{thm:phmlp}.

\begin{proof} 
$\mlp{L}$ has no $j$-paths for $j \geq L$. Thus by Proposition \ref{prop:hom-vanish}, there can only be nontrivial reduced path homology in degree $L-1$. Applying Proposition \ref{prop:repn-1-diff} inductively while using Proposition \ref{prop:2lp} as a base case, we see that $\ker(\p_{L-1}^L)$ has dimension $\prod_{i=1}^L (n_i - 1)$. Since there are no $L$-paths, $\im(\p_{L}^L)$ is trivial. The result follows.\qedhere

\end{proof}

\begin{figure}[t]
\includegraphics[trim = 60mm 130mm 55mm 125mm, clip, width=.24\textwidth,keepaspectratio]{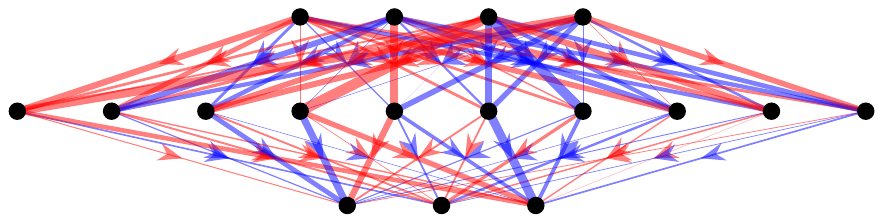}
\includegraphics[trim = 60mm 130mm 55mm 125mm, clip, width=.24\textwidth,keepaspectratio]{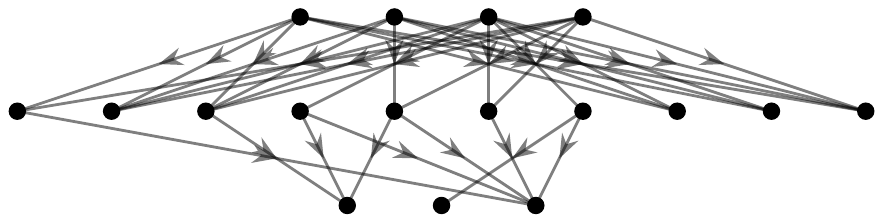}
\centering
\caption{(L) A MLP with $L = 3$ layers and $(n_1,n_2,n_3) = (4,10,3)$, with weight magnitudes indicated by arc thickness and signs indicated by color (red = negative; blue = positive). (R) A subgraph obtained by removing weights with magnitude below the median.}
\label{fig:IrisMLP}
\end{figure}

\begin{figure}[t]
\includegraphics[trim = 45mm 85mm 45mm 85mm, clip, width=.45\textwidth,keepaspectratio]{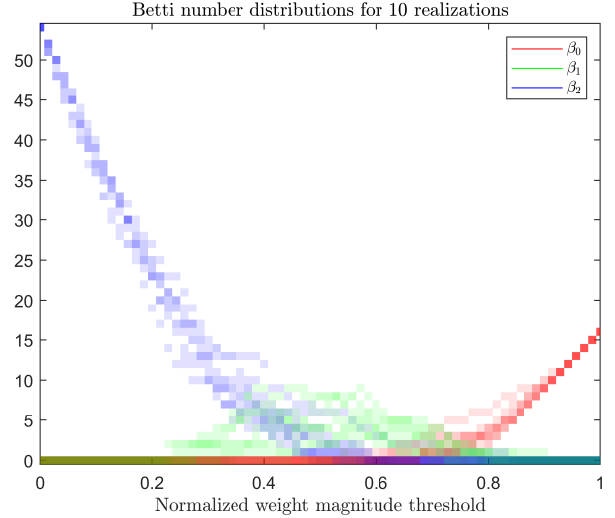}
\centering
\caption{Path homology Betti numbers $\beta_p$ of a trained 3-layer fully-connected network ($(n_1,n_2,n_3) = (4,10,3)$) across normalized weight magnitude thresholds and for 10 realizations of random initial weights. The distribution of Betti numbers is indicated by opacity. The threshold normalization sends the $j$th smallest threshold value to $j/T$, where $T$ is the number of nontrivial threshold values. Note that the network starts with path homology concentrated in degree 2, as predicted by Theorem \ref{thm:phmlp}. Surprising features include the gradual downward cascade of $\b_2$ as well as the ``bump" in $\b_1$.}
\label{fig:dnn_thresholds}
\vspace{-.5cm}
\end{figure}

\section{Discussion}

In this work, we provide characterization results for two types of digraph homologies as applied to feedforward neural network architectures. Our results show that these two homology theories, while similar in structure, yield quite different outputs (with different interpretations) when applied to deep networks. From this perspective, it is important to utilize both types of digraph homology when studying neural architectures.

As an example of the utility of path homology to characterize neural networks, we used Fisher's classical iris data set \cite{Fisher1936} to train a MLP with $L = 3$ layers and $(n_1,n_2,n_3) = (4,10,3)$ using MATLAB's \texttt{patternnet} function. We performed multiple training runs with different realizations of random initial conditions. For each realization, we extracted the trained weight matrix $A$. We then computed the path homology of the DAG obtained by removing arcs corresponding to weights less than a given nontrivial value in $A$ (details of this and other experimental results obtained using the path homology algorithm and its implementation will be provided elsewhere). The results are shown in Figure \ref{fig:dnn_thresholds}. The figure shows that the path homology of a filtered subgraph of a fully-connected network does not suddenly vanish as edges are removed. Rather, the Betti numbers gradually ``cascade down'' from top to zero dimension. This phenomenology suggests that path homology can give particularly detailed topological characterizations of neural networks. For example, given a weight matrix, the corresponding filtered (or persistent) path homology of subnetworks obtained by restricting attention to a moving window of a few adjacent layers can help identify notional sub-networks that exhibit functional specificity.

With further homological characterization of directed graph motifs like those presented in this paper, persistent homological structure derived empirically from analyses of network parameters can be related back to network architectures. This link between parameterized neural network topology and neural network architecture provides actionable insight in the architectural design process and can constrain the architecture search space, as one can  define architectures that \textit{a priori} better suit the observed homological structure learned by a network for a given task.

\textbf{Acknowledgements.} We would like to thank Guilherme Vituri, Michael Robinson, and Bernard McShea for useful discussions. We are very grateful to the anonymous reviewers for their useful suggestions.

\bibliographystyle{ieeetr}
\bibliography{pathHomology}

\end{document}